\newtheorem*{theorem*}{Theorem}
\newtheorem{theorema}{Theorem}
\newtheorem{lema}[theorema]{Lemma}
\newtheorem{theorem}{Theorem}
\newtheorem{propos}[theorem]{Proposition}
\newtheorem{cor}[theorem]{Corollary}
\newtheorem*{cor*}{Corollary}
\newcommand{\ind}{1{\hskip -2.5 pt}\hbox{I}}
\newcommand{\R}{{\mathbb R}}
\def\Z{\mathbb Z}
\def\N{\mathbb N}
\renewcommand\P{\mathbb P}
\def\ds{\displaystyle}
\title{A note on general sliding window processes}
\author{Noga Alon \thanks{Sackler School of Mathematics and
Blavatnik School of Computer Science,
Tel Aviv University,
Tel Aviv 69978, Israel. Email: nogaa@tau.ac.il.
Research supported in part by a
USA-Israeli
BSF grant, by an ISF grant, by the Hermann Minkowski Minerva Center
for
Geometry at Tel Aviv University and by the Israeli I-Core program.}
\and Ohad N. Feldheim \thanks{Department of Mathematics, Weizmann Institute of Science,
 Rehovot 76100, Israel.  Email: ohad\_f@netvision.net.il.
Partially supported by a grant from the Israel Science Foundation.
} }
\begin{document}
\maketitle
\begin{abstract}
Let $f:\R^k\to \R$ be a measurable function, and let
$\{U_i\}_{i\in\N}$ be a sequence of i.i.d. random variables.
Consider the random process $Z_i=f(U_{i},...,U_{i+k-1})$. We show
that for all $\ell$, there is a positive probability, uniform in
$f$, for $Z_1,...,Z_\ell$ to be monotone. We give upper and lower
bounds for this probability, and draw corollaries for $k$-block
factor processes with a finite range.

The proof is based on an application of combinatorial results from
Ramsey theory to the realm of continuous probability.
\end{abstract}
\section{Introduction}\label{sec: intro}

The objective of this note is to bring to the attention of probabilists an
application of tools from Ramsey theory to probabilistic questions on sliding
window processes. These results could be further extended with relative ease
to other noise type variables. Let $f:\R^n\to \R$ be a measurable function,
and let $\{U_i\}_{i\in\N}$ be a sequence of i.i.d. random variables. Consider
the random process $Z^f_i=f(U_{i},...,U_{i+k-1})$. Such processes are called
\emph{$k$-block factors}.

Our main observation is the following:
\begin{theorem}\label{thm: main cont}
For every $k,\ell\in\N$ there exists $p=p_{k,\ell}>0$ such that for
every measurable $f:\R^k\to \R$, one of the following holds:
\begin{align*}
& & & & & \text{either }& \P(Z^f_1< Z^f_2< \dots < Z^f_{\ell})>p, & & \\
& & & & & \text{or }&\P(Z^f_1= Z^f_2= \dots = Z^f_{\ell})>p, & & \\
& & & & & \text{or }&\P(Z^f_1> Z^f_2> \dots > Z^f_{\ell})>p. & & \\
\end{align*}
\end{theorem}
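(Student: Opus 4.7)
My plan is to convert the problem into a finitary one and apply the hypergraph Ramsey theorem. Set $m=k+\ell-1$ and let $N=N(k,\ell)$ be the Ramsey number: the least integer guaranteeing that every $3$-coloring of $\binom{[N]}{k+1}$ contains a monochromatic $m$-subset. I would assume (after an obvious reduction) that $U$ has a continuous distribution, and partition $\R$ into measurable cells $J_1,\dots,J_N$ each of $U$-probability $1/N$. Introduce auxiliary independent random variables $Y_1,\dots,Y_N$, where $Y_j$ has the conditional law of $U$ given $U\in J_j$, and define a (random) $3$-coloring $c$ of $\binom{[N]}{k+1}$ by
\[ c(a_1,\dots,a_{k+1}) := \sgn\bigl(f(Y_{a_1},\dots,Y_{a_k}) - f(Y_{a_2},\dots,Y_{a_{k+1}})\bigr). \]

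The core step applies Ramsey's theorem pointwise to every realization of the $Y_i$'s: each such realization admits a monochromatic $m$-subset. Since there are only $3\binom{N}{m}$ possible (subset,color) pairs, a pigeonhole argument supplies a fixed pair $(S^*,c^*)$, with $S^*=\{s_1^*<\dots<s_m^*\}$, satisfying
\[ \P\bigl(S^*\text{ is monochromatic with color }c^*\bigr) \ge \frac{1}{3\binom{N}{m}}. \]
Whenever $S^*$ is monochromatic with color $c^*$, every one of the $\ell-1$ \emph{consecutive} $(k+1)$-windows $\{s_i^*,s_{i+1}^*,\dots,s_{i+k}^*\}$ inside $S^*$ receives color $c^*$, which is exactly the statement that the sequence $Z'_j:=f(Y_{s_j^*},\dots,Y_{s_{j+k-1}^*})$, $j=1,\dots,\ell$, is monotone of the type prescribed by $c^*$.

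It remains to transfer this back to the original sliding window process. By construction, the conditional law of $(U_1,\dots,U_m)$ given the event $E^*:=\{U_j\in J_{s_j^*}\text{ for all }1\le j\le m\}$ equals the joint law of $(Y_{s_1^*},\dots,Y_{s_m^*})$. Multiplying through by $\P(E^*)=N^{-m}$ gives
\[ \P\bigl((Z_1,\dots,Z_\ell)\text{ is monotone of type }c^*\bigr) \ge \frac{1}{3\,N^m\binom{N}{m}}, \]
a positive quantity depending only on $k$ and $\ell$, yielding the theorem with $p_{k,\ell}:=1/\bigl(3N^m\binom{N}{m}\bigr)$.

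The principal obstacle is bridging an arbitrary, potentially pathological measurable $f$ with combinatorial Ramsey machinery. Choosing \emph{deterministic} representatives (for example, midpoints of the cells) cannot work because $f$ may behave arbitrarily badly at any single point of any cell. Using \emph{random} representatives drawn from the true conditional law of $U$ on each cell is the crucial device: it ensures the probabilistic conclusion inherits the sign pattern dictated by the Ramsey output verbatim, rather than only at a measure-zero set of inputs.
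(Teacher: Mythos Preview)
Your argument is correct, but it proceeds differently from the paper. The paper first passes to the sign function $g(x_1,\dots,x_{k+1})=\sgn\bigl(f(x_2,\dots,x_{k+1})-f(x_1,\dots,x_k)\bigr)$ exactly as you do, but then invokes a separate discrete result (Theorem~\ref{thm: main disc}) rather than hypergraph Ramsey. That discrete result is proved via the chromatic number of the increasing de~Bruijn graph $D(k,M)$ together with Chv\'atal's theorem on monochromatic directed paths; the probabilistic transfer is done by embedding $(U_1,\dots,U_{k+\ell-1})$ as a random increasing index subsequence of $M$ i.i.d.\ samples and observing that the inner combinatorial sum is $\ge 1$ for every realization. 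Your approach replaces the de~Bruijn/Chv\'atal machinery with the full $(k{+}1)$-uniform hypergraph Ramsey theorem and replaces the paper's ``oversampling'' integral identity with the cell partition plus random representatives plus pigeonhole. What you gain is self-containment (only classical Ramsey is needed); what you lose is sharpness, since you force \emph{every} $(k{+}1)$-subset of $S^*$ to be monochromatic when only the $\ell-1$ consecutive windows are actually required, so your $N$ is the hypergraph Ramsey number rather than the smaller threshold coming from Chv\'atal's theorem. Both bounds are tower-type in $k$, so qualitatively the conclusions match.
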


This theorem is a corollary of the following discrete counterpart:
\begin{theorem}\label{thm: main disc}
For every $k,\ell,r\in\N$ there exists $p=p_{k,\ell,r}>0$ such that
for every measurable $f:\R^k\to \{1,\dots,r\}$ the following holds:
$$\P(Z^f_1 = Z^f_2 = \dots = Z^f_{\ell})>p$$
\end{theorem}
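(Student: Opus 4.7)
My plan is to combine Ramsey's theorem for $k$-uniform hypergraphs with the exchangeability of the i.i.d.\ sample. Since $f$ takes values in the finite set $\{1,\dots,r\}$, evaluating $f$ on an increasing $k$-tuple assigns one of $r$ colours to the corresponding $k$-subset of $\R$. On a sufficiently large sample Ramsey then supplies a long monochromatic sub-structure; the remaining issue is to ensure, with positive probability, that the first $s=\ell+k-1$ positions of the process realise this sub-structure in order, which will be achieved via the uniform random labelling of the sample.

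Concretely, I would set $s=\ell+k-1$ and let $N=R^{(k)}(s;r)$ denote the smallest integer such that every $r$-colouring of the $k$-subsets of $\{1,\dots,N\}$ contains a monochromatic $s$-subset. Assume first that the law of $U_1$ is continuous, so that the samples $U_1,\dots,U_N$ are almost surely distinct, and condition on the unordered sample $\{U_1,\dots,U_N\}=\{y_1<\dots<y_N\}$. Define the colouring
\[
\chi\bigl(\{i_1<i_2<\dots<i_k\}\bigr) \;=\; f(y_{i_1},y_{i_2},\dots,y_{i_k}),
\]
and invoke Ramsey's theorem to obtain indices $j_1<j_2<\dots<j_s$ in $\{1,\dots,N\}$ whose $k$-subsets all share a single colour $c$. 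In particular, each of the $\ell$ consecutive windows $\{j_t,\dots,j_{t+k-1}\}$, $t=1,\dots,\ell$, has colour $c$.

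Next I would exploit that, conditional on the unordered sample, the permutation $\sigma\in S_N$ defined by $U_i=y_{\sigma(i)}$ is uniform. Consider the event $E=\{\sigma(t)=j_t\text{ for all }t=1,\dots,s\}$, which has conditional probability $(N-s)!/N!$. On $E$, the $t$-th window equals $(U_t,\dots,U_{t+k-1})=(y_{j_t},\dots,y_{j_{t+k-1}})$, hence
\[
Z^f_t=f(y_{j_t},\dots,y_{j_{t+k-1}})=\chi(\{j_t,\dots,j_{t+k-1}\})=c
\]
for every $t\le\ell$. Taking expectations yields $\P(Z^f_1=\dots=Z^f_\ell)\ge (N-s)!/N!$, a positive constant depending only on $k,\ell,r$.

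I anticipate two minor technical points. First, Ramsey produces a monochromatic subset at arbitrary positions of $\{1,\dots,N\}$, whereas the process demands $\ell$ \emph{consecutive} windows; it is precisely the decoupling into an unordered sample and an independent uniform labelling that bridges this gap at a bounded probabilistic cost, and identifying this decoupling is the real content of the argument. Second, if the law of $U_1$ has atoms then the $y_i$'s are ill-defined; this is remedied by breaking ties with an independent continuous auxiliary variable and ordering the pairs lexicographically, after which the Ramsey extraction and exchangeability argument go through unchanged.
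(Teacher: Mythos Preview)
Your argument is correct: the combination of hypergraph Ramsey with the exchangeability of the sample works exactly as you describe, and the handling of atoms is fine. However, the paper takes a different Ramsey-type route that is worth comparing.

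The paper does not invoke the full hypergraph Ramsey number $R^{(k)}(s;r)$. Instead it works with the increasing de~Bruijn digraph $D(k,M)$, whose vertices are the increasing $k$-tuples from $\{1,\dots,M\}$ and whose edges record a one-step shift. A short argument shows $\chi(D(k+1,m))\ge\log_2\chi(D(k,m))$, and then Chv\'atal's theorem (any $r$-edge-colouring of a digraph with $\chi>\ell^r$ contains a monochromatic directed path of length $\ell$) yields, for $M$ a tower of height $k-2$ topped by $\ell^r$, a monochromatic directed path of $\ell$ vertices in any $r$-vertex-colouring of $D(k,M)$. The averaging over a random sample of size $M$ and a uniform relabelling is then identical to yours.

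The essential difference is that you ask Ramsey for a set of $s=\ell+k-1$ indices with \emph{all} $\binom{s}{k}$ of its $k$-subsets monochromatic, whereas the paper only needs a monochromatic \emph{path} of length $\ell$ in $D(k,M)$, i.e.\ $\ell$ overlapping consecutive $k$-tuples. Your condition is strictly stronger than required, which costs roughly one level of the tower in the resulting bound for $p$. More importantly, the de~Bruijn framework is exactly what the paper reuses for the matching lower bound (Theorem~\ref{thm: tower is a must}), via a colouring of $D(k,M)$ with no long monochromatic path supplied by Moshkovitz--Shapira; your approach, while cleaner to state, does not interface as directly with that construction.
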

In other words, the probability of every $k$-factor to be constant
on a discrete interval of length $\ell$ is bounded away from zero.

\begin{proof}[Proof of Theorem~\ref{thm: main cont} using Theorem~\ref{thm: main
disc}] Let $k,\ell\in\N$, and let $f:\R^k\to \R$. We define a new
function $g:\R^{k+1}\to\{-1,0,1\}$ by
\begin{equation*}
g(x_1,...,x_{k+1})=
\begin{cases}-1 & f(x_1,...,x_k)>f(x_2,...,x_{k+1})\\
0 & f(x_1,...,x_k)=f(x_2,...,x_{k+1})\\
1 & f(x_1,...,x_k)<f(x_2,...,x_{k+1})\\
\end{cases}
\end{equation*}
By Theorem~\ref{thm: main disc} there exists a positive $p=p_{k+1,\ell-1,3}$
such that one of the following holds:
\begin{align*}
& & & & & \text{either }& \P(Z^g_1 = Z^g_2 = \dots = Z^g_{\ell-1}=-1)>p, & & \\
& & & & & \text{or }& \P(Z^g_1 = Z^g_2 = \dots = Z^g_{\ell-1}=0)>p, & & \\
& & & & & \text{or }& \P(Z^g_1 = Z^g_2 = \dots = Z^g_{\ell-1}=1)>p. & & \\
\end{align*}
The theorem follows.
\end{proof}

The particular case $r=2$ which motivated our interest in the problem, is
presented in the following corollary.

\begin{cor}
For every $k,\ell \in\N$ there exists $p=p_{k,\ell}>0$ such that for
every measurable $f:\R^k\to\{0,1\}$ the following holds:
$$\P(Z^f_1 = Z^f_2 = \dots = Z^f_{\ell})>p$$
\end{cor}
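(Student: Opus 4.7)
The plan is essentially to observe that this corollary is an immediate specialization of Theorem~\ref{thm: main disc}. The statement asks for a uniform positive lower bound on the probability that a $\{0,1\}$-valued $k$-block factor is constant on a window of length $\ell$, and Theorem~\ref{thm: main disc} already guarantees exactly this for any finite range $\{1,\dots,r\}$.

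Concretely, I would proceed as follows. First, given $k,\ell\in\N$ and a measurable $f:\R^k\to\{0,1\}$, identify $\{0,1\}$ with the alphabet $\{1,2\}$ by the trivial bijection (say, add $1$), obtaining a measurable function $\tilde f:\R^k\to\{1,2\}$. This relabeling preserves the event $\{Z^f_1=\dots=Z^f_\ell\}$, since constancy is invariant under an injection of the range. Then apply Theorem~\ref{thm: main disc} with $r=2$ to $\tilde f$ to obtain the constant $p := p_{k,\ell,2}>0$, independent of $f$, satisfying
\[
\P(Z^{\tilde f}_1=Z^{\tilde f}_2=\dots=Z^{\tilde f}_\ell)>p,
\]
which is the required bound.

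Since the corollary is a direct instance of the discrete theorem, there is no substantive obstacle; the only minor point to be careful about is that the constant $p_{k,\ell}$ furnished by the corollary coincides with the constant $p_{k,\ell,2}$ of Theorem~\ref{thm: main disc}, and the bound is therefore truly uniform over all measurable $f:\R^k\to\{0,1\}$.
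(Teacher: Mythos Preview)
Your proposal is correct and matches the paper's treatment: the paper presents this corollary simply as the special case $r=2$ of Theorem~\ref{thm: main disc}, with no separate proof given. Your identification of $\{0,1\}$ with $\{1,2\}$ and invocation of $p_{k,\ell,2}$ is exactly what is needed.
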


The decay of $p$ as a function of $k$ in the above theorems is very fast. In
particular the $p$ which Theorem~\ref{thm: main disc} yields is $\frac
1{M^{k+l-1}}$ for $M$ satisfying
$$M={\underbrace{2^{2^{{\iddots}^{2}}}}_{\text{$k-2$ times}}}^{^{\hspace{-10pt}\ell^r}}.$$
Such tower dependency is in fact essential as the following proposition
shows:

\begin{theorem}\label{thm: tower is a must}
For all large enough $k$, there exists $f:[0,1]^k\to\{0,1\}$ such that
for the
process $\{Z^f_i\}$ defined as above with respect to uniform variables
$U_i$ on $[0,1]$ the following holds:
$$\P(Z^f_1 = Z^f_2 = \dots = Z^f_{2k})<\frac{ 9 k^2}{M},$$
where $$M =
{\underbrace{2^{2^{{\iddots}^{2}}}}_{\underset{\text{times}}{k-2}}}^{^{k/\sqrt8}}.$$
\end{theorem}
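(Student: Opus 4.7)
The plan is to construct $f$ via a continuous adaptation of the Erd\H{o}s--Hajnal--Rado stepping-up construction from hypergraph Ramsey theory, iterated $k-2$ times so that the ground set inflates into the tower $M$.

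First I would discretize: partition $[0,1]$ into $M$ equal subintervals and let $b(x)\in[M]$ denote the index of the subinterval containing $x$. Let $E$ be the event that $b(U_1),\ldots,b(U_{3k-1})$ are all distinct; a union bound gives $\P(E^c)\le\binom{3k-1}{2}/M\le 9k^2/(2M)$. I design $f$ so that on $E$ its value depends only on the bucket sequence $(b(x_1),\ldots,b(x_k))$, through a two-coloring $c$ of ordered $k$-tuples of distinct elements of $[M]$. Conditional on $E$, the map $j\mapsto b(U_j)$ is a uniformly random injection $\sigma\colon\{1,\ldots,3k-1\}\hookrightarrow[M]$, and the event $\{Z^f_1=\cdots=Z^f_{2k}\}$ becomes the event that the $2k$ consecutive $k$-windows of $\sigma$ all share a common $c$-color. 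It therefore suffices to exhibit $c$ with the following windowed-avoidance property: for every such $\sigma$, the $2k$ colors $c(\sigma(i),\ldots,\sigma(i+k-1))$ for $i=1,\ldots,2k$ are not all equal.

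I would build $c$ recursively on the tuple size $r$, starting from a probabilistic base case $c_2$ on a vertex set of size $\asymp 2^{k/\sqrt 8}$ and promoting $c_r$ on $[N]$ to $c_{r+1}$ on $[2^N]\cong\{0,1\}^N$ via the standard Erd\H{o}s--Hajnal--Rado rule: for an ordered $(r+1)$-tuple $v_1,\ldots,v_{r+1}$ one forms the \emph{$\delta$-sequence} $\delta_i=\max\{j:v_i(j)\neq v_{i+1}(j)\}$, and sets the color of $c_{r+1}$ by the monotonicity pattern of the $\delta_i$'s in conclusive cases and by $c_r(\delta_1,\ldots,\delta_r)$ otherwise. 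After $k-2$ iterations the ground set of $c_k$ equals $M$, and with $f$ encoding $c_k$ the event $\{Z^f_1=\cdots=Z^f_{2k}\}$ is contained in $E^c$, yielding the claimed bound $9k^2/M$.

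The main technical obstacle is verifying that each stepping-up step preserves the windowed-avoidance property rather than merely the classical no-monochromatic-$s$-subset condition. Concretely, I must show that a length-$2k$ constant windowed run of $c_{r+1}$ along some injection into $[2^N]$ forces a corresponding length-$2k$ constant windowed run of $c_r$ along a derived injection into $[N]$, contradicting the inductive hypothesis. The argument will split on the monotonicity profile of the $\delta$-sequence: the purely monotone subcases are excluded by the classification rule in the coloring, while the generic case reduces to windowed-avoidance of $c_r$ after passing to an appropriate sub-injection. A careful bookkeeping of how the parameters $2k$ (window length) and $3k-1$ (base set size) propagate through the $\delta$-extraction is what should ultimately pin down the precise exponent $k/\sqrt 8$ at the top of the tower.
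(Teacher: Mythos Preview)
Your outer framework---discretize $[0,1]$ into $M$ buckets, design a $2$-coloring of ordered $k$-tuples of distinct elements of $[M]$ with a deterministic ``no monochromatic $2k$-run of sliding windows'' property, and then bound $\P(Z^f_1=\cdots=Z^f_{2k})$ by the probability $\P(E^c)$ that two buckets coincide---is exactly the paper's structure. The divergence is entirely in how the coloring is built.

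The paper does not attempt a unified stepping-up over arbitrary ordered tuples. It separates two cases. For \emph{monotone} tuples it imports, as a black box, a result of Moshkovitz and Shapira (stated in the paper as Lemma~\ref{Lema: MS}/Proposition~\ref{Prop: MS}) giving a vertex $2$-coloring $g$ of the increasing de~Bruijn graph $D(k,M)$ with no monochromatic directed path of length $k$; this is where the tower $M=t_{k-2}(k/\sqrt8)$ enters, and no reconstruction of the stepping-up is carried out. For \emph{non-monotone} tuples it uses a one-line device: on any $k$-tuple that is neither increasing nor decreasing, set $h(z_1,\dots,z_k)=\alpha(z_2,z_3)$ with $\alpha(x,y)=\mathbf{1}\{x\ge y\}$. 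If the stretch $z_2,\dots,z_{2k}$ is not monotone there is a local extremum $z_i$ with $3\le i\le 2k-1$, and the two windows starting at positions $i-2$ and $i-1$ then receive opposite $\alpha$-values. Hence a monochromatic run forces the middle stretch to be monotone, and there $g$ forbids it. The whole non-monotone analysis is two lines.

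Your proposed unified recursion has a concrete obstruction at exactly this point. The Erd\H os--Hajnal--Rado $\delta$-map relies on the fact that for a \emph{monotone} triple $v_1<v_2<v_3$ (or reversed) the leading differing bits satisfy $\delta(v_1,v_2)\neq\delta(v_2,v_3)$. For an arbitrary ordered triple this can fail (take $v_2$ smaller than both $v_1$ and $v_3$, all sharing the same top block), so consecutive $\delta$-values may coincide and $c_r(\delta_1,\dots,\delta_r)$ is no longer a tuple of distinct entries; your recursion ``$c_{r+1}$ via $c_r$'' is then undefined precisely on the non-monotone injections you need to cover. Your base case is also shaky: a $2$-coloring $c_2$ of ordered pairs on a set of size $N\asymp 2^{k/\sqrt8}$ that avoids every monochromatic length-$2k$ walk cannot be produced by a first-moment random argument, since the number of injections $\sigma:[2k{+}1]\hookrightarrow[N]$ is of order $N^{2k+1}\gg 2^{2k}$. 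The paper sidesteps both issues: the Moshkovitz--Shapira input already delivers the monotone case at the right tower height (so the exponent $k/\sqrt8$ needs no bookkeeping on your side), and the $\alpha(z_2,z_3)$ trick disposes of the non-monotone case without any recursion at all.
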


\section{Background and motivation}
The research of $k$-block factors originated as a part of a wider attempt to
understand \emph{$m$-dependent }processes. These generalize independent
processes in discrete time, by requiring that every two events which are
separated by a time-interval with length more than $m$ will be independent.
Such processes arise naturally as scaling limits in renormalization theory
(see for example \cite{OBR}). Clearly, every $k$-block factor is
$(k-1)$-dependent. For a while the converse was also conjectured to hold, to
the extent that in certain papers, results on $k$-block factors are presented
as results on $(k-1)$-dependent processes, conditioned on the validity of the
conjecture (see for example \cite{SJ}).

While for Gaussian processes, every $m$-dependent process is indeed
an $m+1$-block factor, we now know that for general $m$-dependent
processes this is not true. Ibragimov and Linnik have already stated
in 1971 that there should exist a $1$-dependent process which is not a
$2$-block factor, but provided no example. The first example was
published by Aaronson and Gilat in \cite{AG} in 1987. Later, in
\cite{BGRM}, Burton, Goulet and Meester showed that there exists a
$1$-dependent process which is not a $k$-block factor for any $k$.

One property of binary block factors, i.e., block factors with range
$\{0,1\}$, which have been extensively studied, is the probability
of observing $r$ consecutive occurrences of the value $b$ in the
process. This event is called an $r$-\emph{run} of $b$-s. Janson, in
\cite{SJ}, studied the convergence of the statistics of runs of
zeros in a $k$-factor in which every two ones are guaranteed to be
separated by $k-1$ zeros. De Valk, in \cite{V1}, computed the
minimal and maximal possible probability of a $2$-run of ones given
the marginal probability of seeing the value one. Such studies give
rise to the following natural question: is it possible to create a
binary $k$-block factor for some $k$ which almost surely has neither
an $r$-run of zeros nor an $r$-run of ones? Here we show that this
is impossible. The result is twofold. On one hand, the probability
of seeing an arbitrarily long run is bounded away from zero. On the
other hand, it can be extremely small.

\section{Proof of the results}\label{sec: Proof}
This section is dedicated to the proofs of Theorems~\ref{thm: main
disc} and~\ref{thm: tower is a must}. For this purpose we shall use
a classical result on de-Bruijn graphs, whose proof we present for
completion.

For a directed graph $G$ let $\chi(G)$, the chromatic number of $G$,
denote the minimal number of colors required to color the vertices
of $G$ so that no two adjacent vertices get the same
color.

Define $D(k,m)$, the increasing $k$-dimensional de-Bruijn graph of $m$
symbols, to be the directed graph whose vertices are all the strictly
increasing sequences of length $k$ with elements in $\{1,\dots,m\}$, such
that there is a directed edge from the sequence $\{a_1,...,a_k\}$ to the
sequence $\{b_1,...,b_k\}$ if and only if $b_i=a_{i+1}$ for all
$i\in\{1,\dots,k-1\}$.

We shall make use of the fact that $D(k+1,m)$ is the directed line-graph of
$D(k,m)$. That is - that the map
$\phi:(a_1,...,a_{k+1})\to\left((a_1,...,a_{k}),(a_2,...,a_{k+1})\right)$ is
a bijection, mapping every vertex of $D(k+1,m)$ to an edge of $D(k,m)$.

\begin{theorema}\label{thm: debruijn}
$\log_2 \chi\left(D(k,m)\right) \leq \chi \left(D(k+1,m)\right)$.
\end{theorema}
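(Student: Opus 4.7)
The plan is to exploit the structural fact, already noted in the excerpt, that $D(k+1,m)$ is the directed line-graph of $D(k,m)$ via the bijection $\phi$. Starting from an optimal proper coloring $\psi$ of $D(k+1,m)$ using $c := \chi(D(k+1,m))$ colors, I would first pull $\psi$ back through $\phi$ to obtain an \emph{edge}-coloring of $D(k,m)$. The line-graph adjacency then translates into a clean local condition at each vertex $v$ of $D(k,m)$: no color appears simultaneously on some edge entering $v$ and on some edge leaving $v$, for any such pair of edges would correspond to two adjacent vertices of $D(k+1,m)$ sharing a $\psi$-color.

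Next I would convert this edge-coloring into a vertex-coloring of $D(k,m)$ by assigning to each vertex $v$ the subset $S(v) \subseteq \{1,\ldots,c\}$ consisting of the colors of edges entering $v$. Since $S(v)$ is an element of the power set of a $c$-element set, the total number of distinct ``set-colors'' used is at most $2^c$, so provided this new coloring is proper one gets $\chi(D(k,m)) \le 2^c$, which is equivalent to the stated inequality.

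To verify properness, consider any edge $(u,v)$ of $D(k,m)$, and let $\gamma$ be its edge-color. By construction $\gamma \in S(v)$. If $\gamma$ also lay in $S(u)$, some edge $e'$ entering $u$ would carry color $\gamma$; then $\phi^{-1}(e')$ and $\phi^{-1}((u,v))$ would be adjacent vertices of $D(k+1,m)$ both colored $\gamma$ by $\psi$, contradicting properness of $\psi$. Hence $\gamma \in S(v) \setminus S(u)$, so $S(u) \neq S(v)$.

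I do not anticipate a serious obstacle: the argument is essentially a book-keeping exercise around the line-graph identification. The only mildly delicate point is that vertices of $D(k,m)$ whose first coordinate is the minimum available symbol have no incoming edges, so they are all assigned $S(v) = \emptyset$; but the argument above still rules out a monochromatic edge even in this boundary case, because whenever $(u,v)$ is an edge the set $S(v)$ automatically contains the color of $(u,v)$ and is therefore non-empty.
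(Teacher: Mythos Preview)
Your argument is correct and is essentially the same as the paper's: both pull back a proper vertex coloring of $D(k+1,m)$ to an edge-coloring of $D(k,m)$ with no monochromatic directed path of length two, and then color each vertex by the set of colors it sees on incident edges. The only cosmetic difference is that the paper records the colors of \emph{outgoing} edges while you record the colors of \emph{incoming} edges; the properness verification is the same up to reversing the direction of the monochromatic length-two path that would yield a contradiction.
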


\begin{proof}
Using the fact that $D(k+1,m)$ is the directed line-graph of $D(k,m)$, we get
that a vertex coloring of $D(k+1,m)$ is equivalent to an edge coloring of
$D(k,m)$ in which there is no monochromatic directed path of length $2$.
Thus, it is enough to show that for every such coloring of $E(D(k,m))$ using
$q$ colors, there exists a coloring of $V(D(k,m))$ using $2^q$ colors.

Let $C:E(D(k,m))\rightarrow\{1,\dots,q\}$ be an edge-coloring of $D(k,m)$ as
above. Construct $C':V(D(k,m))\rightarrow\mathcal{P}\{1,\dots,q\}$ using the
subsets of $\{1,\dots,q\}$ as colors in the following way. Define
$C'(u)=\{C(u,v)\ :\ (u,v)\in E(D(k,m))\}$. To see that $C'$ is a proper
vertex coloring, observe that if $C'(u)=C'(v)$ and $(u,v)\in E(D(k,m))$ then
$C(u,v)\in C'(v)$ which implies the existence of $(v,w)$ such that
$C(v,w)=C(u,v)$, in contradiction to our premises.
\end{proof}

Since clearly $\chi(D(1,m))=m$, we get that for $k\ge2$,
\begin{equation}\label{eq: bound}
\chi(D(k,m)) \geq \log_2^{(k-1)}(m),
\end{equation}
where $\log_2^{(k)}$ represents $k$ iterations of the the function $\log_2$.

We now use the following theorem by Chv\'atal \cite{CV}.
\begin{theorema}[Chv\'atal]\label{thm: chvatal}
Let $D$ be a directed graph and let $\ell, r\in\N$ such that
$\chi(D)>\ell^r$; then any edge-coloring of $D$ with $r$ colors contains a
monochromatic directed path of $\ell$ edges.
\end{theorema}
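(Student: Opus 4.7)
My plan is to prove this via the contrapositive: assume that for every color $i$ every monochromatic directed path has fewer than $\ell$ edges, and deduce $\chi(D)\le \ell^r$. The strategy is a colored refinement of the classical Gallai--Roy theorem ($\chi(D)\le L(D)+1$, where $L(D)$ is the length of the longest directed path), which I would obtain by running the Gallai--Roy argument separately in each color class and combining the resulting ``level'' functions into a single vertex coloring.

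Concretely, for each color $i\in\{1,\dots,r\}$, let $D_i$ be the subgraph of $D$ spanned by the edges of color $i$, and pick a maximal acyclic spanning subgraph $H_i\subseteq D_i$. For every vertex $v$ and every color $i$, define $f_i(v)$ to be the number of edges in a longest directed path of $H_i$ that ends at $v$. Since $H_i\subseteq D_i$ contains no $\ell$-edge directed path by assumption, we have $f_i(v)\in\{0,1,\dots,\ell-1\}$. Define
\[
\phi:V(D)\to\{0,1,\dots,\ell-1\}^r,\qquad \phi(v)=(f_1(v),\dots,f_r(v)),
\]
which has at most $\ell^r$ possible values.

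The heart of the proof is the claim that $\phi$ is a proper vertex coloring. Let $(u,v)$ be any edge of $D$, with color $i_0$. If $(u,v)\in H_{i_0}$, then a longest $H_{i_0}$-path ending at $u$ can be extended by this edge to give $f_{i_0}(v)\ge f_{i_0}(u)+1$. If instead $(u,v)\notin H_{i_0}$, then by maximality $H_{i_0}\cup\{(u,v)\}$ contains a directed cycle, which must go through the new edge, so $H_{i_0}$ already contains a directed path from $v$ to $u$; concatenating a longest $H_{i_0}$-path ending at $v$ with this $v\to u$ path yields $f_{i_0}(u)\ge f_{i_0}(v)+1$. Either way $f_{i_0}(u)\ne f_{i_0}(v)$, so $\phi(u)\ne\phi(v)$. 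This gives $\chi(D)\le \ell^r$, which is the contrapositive of what we want.

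The step I expect to require the most care is verifying, in both cases above, that extending a longest path by a new edge (or by a directed subpath in the second case) actually produces a genuine simple path rather than a walk with a repeated vertex. This is exactly where acyclicity of $H_i$ is used: any repeated vertex in the concatenation would, together with the edge or subpath being appended, force a directed cycle inside $H_i$, contradicting the choice of $H_i$. Once this observation is cleanly stated (essentially a one-line lemma about longest-path functions in DAGs), the rest of the argument is formal, and taking $\chi(D)>\ell^r$ forces some $f_i$ to attain the value $\ell$, producing the desired monochromatic $\ell$-edge directed path.
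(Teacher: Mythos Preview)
The paper does not actually prove Theorem~\ref{thm: chvatal}; it simply quotes the result and cites Chv\'atal's original two-page note~\cite{CV}. Your argument is correct and is essentially the classical proof: it is precisely the colored refinement of the Gallai--Roy--Vitaver theorem, and your use of a maximal acyclic spanning subgraph $H_i$ in each color class is exactly what is needed to make the ``extend the longest path'' step go through in the presence of directed cycles. The verification you flag as delicate (that concatenations stay simple) is handled correctly by your acyclicity observation. One small expository quibble: your last sentence (``forces some $f_i$ to attain the value $\ell$'') overstates what the contrapositive gives you---you do not construct the long path directly, you only conclude that $\phi$ is a proper $\ell^r$-coloring and hence $\chi(D)\le\ell^r$---but the logic of the proof is sound.
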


Combining this with the fact that $D(k+1,m)$ is the directed line-graph of
$D(k,m)$ and with \eqref{eq: bound}, we draw the following corollary.

\begin{cor}\label{cor: maincor}
Given $k,\ell,r\in\N$, let $M$ be an integer such that
$\log_2^{(k-2)}(M)={\ell}^r$.
 Then any $r$-coloring of the vertices of
$D(k,M)$ contains a monochromatic directed path of $\ell$ vertices.
\end{cor}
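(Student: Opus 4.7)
The plan is to combine the line-graph identification of $D(k,M)$ with $D(k-1,M)$ (already exploited in the proof of Theorem~\ref{thm: debruijn}) together with the chromatic lower bound~\eqref{eq: bound} and Chv\'atal's theorem. The key observation is that an $r$-coloring of the \emph{vertices} of $D(k,M)$ is literally the same data as an $r$-coloring of the \emph{edges} of $D(k-1,M)$, via the bijection $\phi$. Moreover, a directed path of $\ell$ vertices $v_1\to\cdots\to v_\ell$ in $D(k,M)$ corresponds, under $\phi$, to a sequence of $\ell$ consecutive directed edges $\phi(v_1),\ldots,\phi(v_\ell)$, i.e.\ to a directed edge-path of length $\ell$ in $D(k-1,M)$; monochromaticity is preserved by this correspondence.

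Given this dictionary, the argument becomes a single application of Chv\'atal's theorem to $D(k-1,M)$. Applying~\eqref{eq: bound} with $k-1$ in place of $k$ (or, for $k=2$, using the trivial equality $\chi(D(1,M))=M$), we obtain $\chi\bigl(D(k-1,M)\bigr) \geq \log_2^{(k-2)}(M) = \ell^r$, which meets the threshold required by Theorem~\ref{thm: chvatal}. That theorem, applied to the $r$-edge-coloring of $D(k-1,M)$ induced via $\phi$ from the given vertex-coloring of $D(k,M)$, yields a monochromatic directed path of $\ell$ edges. Pulling this path back through $\phi^{-1}$ produces the desired monochromatic directed path of $\ell$ vertices in $D(k,M)$.

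I do not expect any genuine obstacle; the proof is essentially bookkeeping. The one point that needs to be handled with some care is the shift in parameters: a directed path of $\ell$ \emph{vertices} in $D(k,M)$ becomes a directed path of $\ell$ \emph{edges} (not $\ell-1$) in $D(k-1,M)$, precisely because the vertices of the former are the edges of the latter. Once this correspondence is tracked carefully, the chromatic-threshold condition in Chv\'atal's theorem lines up exactly with the iterated-logarithm bound on $\chi(D(k-1,M))$ coming from Theorem~\ref{thm: debruijn}, and the conclusion follows immediately.
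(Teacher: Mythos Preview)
Your proposal is correct and follows exactly the approach the paper indicates in the sentence preceding the corollary: use the line-graph identification to convert the given vertex $r$-coloring of $D(k,M)$ into an edge $r$-coloring of $D(k-1,M)$, invoke~\eqref{eq: bound} (with $k-1$ in place of $k$) to get $\chi(D(k-1,M))\ge \log_2^{(k-2)}(M)=\ell^r$, and then apply Theorem~\ref{thm: chvatal}. Your bookkeeping on the path correspondence (that $\ell$ vertices in $D(k,M)$ become $\ell$ edges in $D(k-1,M)$) is accurate and is precisely the point the paper leaves implicit.
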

Using this we are ready to prove Theorem~\ref{thm: main disc}.

\begin{proof}[Proof of Theorem~\ref{thm: main disc}] Let $r\in \N$, let $f:[0,1]^k\to\{1,\dots, r\}$ be a
measurable function, $U_i$ uniform on $[0,1]$ and
$\{Z^f_i=f(U_{i},...,U_{i+k-1})\}_{i\in\Z}$. Observe that since
measurable sets on $\R$ and on $[0,1]$ are isomorphic, our choice of
the distribution of $U_i$ does not limit the generality of our
proof. Choose $M=M(k,\ell,r)$ as in Corollary~\ref{cor: maincor} to
get:

\begin{align*}\label{eq: comp}
\ds \P & \left(X_1=\dots =X_\ell \right)=\int_0^1 dx_1 \cdots
\int_0^1 dx_{l+k-1} \ \ind \left\{
f(x_1,\dots,x_k)=\dots=f(x_l,\dots,x_{l+k-1})\right\}\\
& = \int_0^1 dy_\ell\cdots \int_0^1 dy_M  \ \frac {(M-k-l+1)!} {M!}
\sum_{1\le j_1<\dots<j_{l+k-1}\le M} \ind \left\{
f(y_{j_1},\dots,y_{j_k})=\dots=f(y_{j_l},\dots,y_{j_{l+k-1}})\right\}.
\end{align*}
where the equality in the second line is obtained by first picking
$M$ random i.i.d.
values in $[0,1]$ and then by assigning a random set of $l+k-1$ of
them to the
variables $x_1,\dots,x_{l+k-1}$ uniformly at random.

Now, for a given $\bar y=(y_1,\dots,y_M)\in[0,1]^M$, the inner sum
counts the number of monochromatic directed paths in $D(k,M)$, when
coloring by
$$c(a_1,\dots, a_{k}) = f(y_{a_1},\dots,y_{a_k}).$$
This is an $r$-coloring, therefore by the above corollary, this
inner sum is at least $1$. We conclude that
$$\P \left(X_1=\dots =X_l \right) \ge \frac {(M-k-l+1)!} {M!}>\frac{1}{M^{k+l-1}},$$
as required.
\end{proof}

\subsection{Tower dependency is essential}

This subsection contains the proof of Theorem~\ref{thm: tower is a
must}.

For $i>1$, the 2-tower function, $t_i$, denotes the function
satisfying $t_i(k)=2^{t_{i-1}(k)}$, and $t_1(k)=k$. Also, recall the
notation $D(k,m)$ of the increasing $k$-dimensional de-Bruijn graph
of $m$ symbols which is defined in the beginning of this section.

In our proof we use the following lemma of Moshkovitz and Shapira (see
\cite[Corallary~3]{MS}).

\begin{lema}\label{Lema: MS}
There exists $n_0\in \N$ such that for any $k\ge 3$, $q\ge 2$ and $n>n_0,$
there exists an edge coloring of $D(k,t_{k-1}(n^{q-1}/\sqrt8))$ with $q$
colors which contains no monochromatic path of length $n$.
\end{lema}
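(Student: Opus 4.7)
The plan is to prove the lemma by induction on $k$, following the strategy of Moshkovitz and Shapira. First, recall that an edge coloring of $D(k,m)$ is equivalent to a $q$-coloring of the increasing $(k+1)$-tuples of $[m]$, and that a monochromatic directed path of length $n$ is an increasing $(k+n)$-tuple all of whose consecutive $(k+1)$-subwindows receive the same color. The induction thus takes the shape: a good $q$-coloring of the increasing $k$-tuples of $[m']$ lifts to a good $q$-coloring of the increasing $(k+1)$-tuples of $[2^{m'}]$; iterating this lifting from a base case supplies one level of the $2$-tower per step, delivering exactly the parameter $t_{k-1}(n^{q-1}/\sqrt{8})$.

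For the base of the induction, I would take $k=2$ and construct, on $m=\lfloor n^{q-1}/\sqrt{8}\rfloor$ symbols, a $q$-coloring of increasing triples avoiding any monochromatic monotone path of length $n$. This is done via an Erd\H{o}s--Szekeres-type construction: encode each triple by an integer partition recording its relative position within a canonical length-$q$ decomposition of the ambient sequence, and color by the isomorphism type of that partition. A careful count of these partitions, together with a Stirling-type estimate on a central binomial coefficient, is where the constant $1/\sqrt{8}$ appears.

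For the inductive lifting, I would identify $[2^{m'}]$ with the subsets of $[m']$ via an order-preserving encoding (the characteristic vector read as a binary integer works). Given an increasing $(k+1)$-tuple $(a_1,\dots,a_{k+1})$ of $[2^{m'}]$, corresponding to integer-increasing subsets $A_1<\dots<A_{k+1}$ of $[m']$, extract a canonical increasing $k$-tuple $(b_1,\dots,b_k)$ in $[m']$ from the successive differences $A_{i+1}\setminus A_i$---for instance, take $b_i$ to be the position of the most significant bit in which $A_i$ and $A_{i+1}$ differ---and color $(a_1,\dots,a_{k+1})$ by the value of the inductive coloring on $(b_1,\dots,b_k)$. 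A monochromatic monotone path of length $n$ under the lifted coloring would then yield, via the extraction, a monochromatic monotone path of length $n$ under the previous coloring, contradicting the inductive hypothesis.

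The main obstacle is the verification of compatibility for this extraction: designing the projection from increasing $(k+1)$-tuples of subsets of $[m']$ to increasing $k$-tuples of elements of $[m']$ so that consecutive windows along a monotone path at the new level descend to consecutive windows along a monotone path at the previous level, with the color class preserved. Making the bit-indexing extraction play nicely with the sliding $(k+1)$-window is the delicate combinatorial heart of the argument; tracking the constant $1/\sqrt{8}$ through each stage of the tower is then a routine, if careful, bookkeeping exercise on top of the lift.
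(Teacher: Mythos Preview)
The paper does not prove this lemma at all: it is quoted verbatim as Corollary~3 of Moshkovitz and Shapira \cite{MS} and used as a black box. So there is no ``paper's own proof'' to compare your proposal against; any attempt to prove it here goes strictly beyond what the paper does.

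That said, your sketch is in the right spirit for the Moshkovitz--Shapira argument (integer partitions for the base case, a recursive lift gaining one exponential per dimension), but as you yourself flag, the lifting step is only a heuristic here. In particular, the ``most significant differing bit'' map $(A_i,A_{i+1})\mapsto b_i$ does not obviously produce a \emph{strictly increasing} $k$-tuple $(b_1,\dots,b_k)$, nor is it clear that sliding the $(k{+}1)$-window by one step at the upper level shifts the extracted $k$-tuple by exactly one position at the lower level. Without a concrete projection satisfying both properties, the induction does not close. If you want to include a self-contained proof, you would need to reproduce the actual combinatorial gadget from \cite{MS}; otherwise, citing the result as the paper does is the appropriate move.
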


Recalling that edge colorings of $D(k-1,m)$ are the same as vertex colorings
of $D(k,m)$, and plugging $q=2$, $n=k$ in Lemma~\ref{Lema: MS}, we get the
following useful proposition.

\begin{propos}\label{Prop: MS}
For every large enough $k$, there exists a vertex $2$-coloring of
$D(k,t_{k-2}(k/\sqrt8))$ such that no path of length $k$ is
monochromatic.
\end{propos}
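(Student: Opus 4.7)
The plan is to deduce the proposition directly from Lemma~\ref{Lema: MS} by a suitable parameter substitution, followed by transporting the resulting edge coloring to a vertex coloring via the line-graph correspondence recalled at the start of this section.

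First, I would apply Lemma~\ref{Lema: MS} with the dimension parameter set to $k-1$ (playing the role of the lemma's $k$), with $q=2$ colors, and with path length $n=k$. The hypotheses $k-1\geq 3$ and $k>n_0$ both hold for all sufficiently large $k$, and the substitution $q=2$ collapses the exponent $n^{q-1}$ to $k^1=k$. The lemma then produces a $2$-edge-coloring of
$$D\bigl(k-1,\;t_{k-2}(k/\sqrt 8)\bigr)$$
that contains no monochromatic directed path of length $k$.

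Second, I would push this coloring forward through the bijection $\phi$ from the beginning of Section~\ref{sec: Proof}: since $D(k,m)$ is the directed line graph of $D(k-1,m)$, the edges of $D(k-1,m)$ correspond bijectively to the vertices of $D(k,m)$, and a directed path of $k$ consecutive edges in $D(k-1,m)$ corresponds precisely to a directed path of $k$ consecutive vertices in $D(k,m)$, with the color of each edge becoming the color of the corresponding vertex. The edge coloring produced by Lemma~\ref{Lema: MS} therefore pulls back to a $2$-vertex-coloring of $D(k,t_{k-2}(k/\sqrt 8))$ containing no monochromatic path of length $k$, which is exactly the proposition.

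I do not anticipate any substantive obstacle: the result is a bookkeeping corollary of Lemma~\ref{Lema: MS}. The only subtlety worth flagging is tracking the ``length $k$'' convention (counted in edges for an edge coloring and in vertices for a vertex coloring) and the shift from dimension $k$ to $k-1$ in the de-Bruijn graph; both are handled uniformly by the bijection $\phi$.
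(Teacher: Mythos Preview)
Your proposal is correct and matches the paper's own derivation essentially verbatim: the paper likewise obtains the proposition by ``plugging $q=2$, $n=k$'' into Lemma~\ref{Lema: MS} (with the lemma's dimension parameter taken as $k-1$, which accounts for the shift from $t_{k-1}$ to $t_{k-2}$) and then invoking the line-graph correspondence between edge colorings of $D(k-1,m)$ and vertex colorings of $D(k,m)$. Your explicit check of the hypotheses $k-1\ge 3$, $k>n_0$ and your remark on the edge-versus-vertex counting convention are helpful clarifications but do not depart from the paper's argument.
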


We are now ready to prove Theorem~\ref{thm: tower is a must}.
\begin{proof}[Proof of Theorem~\ref{thm: tower is a must}]
Let $M=t_{k-2}(k/\sqrt8)$, and let $g$ be a vertex $2$-coloring of
$D(k,M)$ by the colors $\{0,1\}$
such that no path of length $k$ is monochromatic, as exists
by Proposition~\ref{Prop: MS}. Define $h:\{1,\dots,M\}^k\to\{0,1\}$
as follows:
\begin{equation}\label{eq: four cases}
h(z_1,\dots,z_k)=\begin{cases}
g(z_1,\dots,z_k) & z_1<\dots<z_k\\
g(z_k,\dots,z_1) & z_1>\dots>z_k\\
0 & \exists i\neq j\text{ s.t. }z_i=z_j\\
\alpha(z_2,z_3)  & \text{otherwise},
\end{cases}
\end{equation}
where $\alpha(x,y)$ takes the value $0$ if $x<y$, and $1$ otherwise.

 Let $z_1,\dots,z_{3k}$ be distinct integers in $\{1,\dots,M\}$. We
claim that the following is impossible:
\begin{equation}\label{eq: h const}
h(z_1,\dots,z_{k})=h(z_2,\dots,z_{k+1})=\dots=h(z_{2k+1},\dots,z_{3k}).
\end{equation}

Assuming the contrary, we study two cases.

The first case is when $z_2,\dots,z_{2k}$ is monotone. In this case
$h$ is equal to $g$ along the path
$$(z_{2},\dots,z_{k+1}), (z_{3},\dots,z_{k+2}), \dots,
(z_{k+1},\dots,z_{2k})$$ (by the first or second case of the definition
\eqref{eq: four cases}). This is a contradiction, since $g$ cannot be
constant along a path of length $k$ in $D(k,M)$.

In the complimentary case there exists a local extremum among
$z_3,\dots,z_{2k-1}$, i.e., there exists $i\in \{3,\dots,2k-1\}$
such that either $z_i> \max\{z_{i-1},z_{i+1}\}$ or
$z_i<\min\{z_{i-1},z_{i+1}\}$. Thus, the values
\begin{align*}
&h(z_{i-2},z_{i-1},\dots,z_{i+k-3}) = \alpha(z_{i-1},z_{i}), \: \text{and}\\
& h(z_{i-1},z_i, \dots,z_{i+k-2}) = \alpha(z_i,z_{i+1})
\end{align*}
are not equal, which also leads to a contradiction.

Now, observe that taking uniform distribution over $1,\dots, M$ the
probability that $(z_i)_{i\in\{1,\dots,3k\}}$ are distinct is
greater than
$$\prod_{j=0}^{3k-1} \left(1-\frac j M\right)>1-\frac{9k^2}{M}.$$
We may therefore define $f:[0,1]^k\to \{0,1\}$ to be
$f(x_1,\dots,x_k)=h(\lceil M x_1 \rceil,\dots,\lceil M x_k \rceil)$
and get
$$\P(Z^f_1 = Z^f_2 = \dots = Z^f_{2k})<\frac{9k^2}{M}$$ as
required.
\end{proof}

\section{Acknowledgments}\label{sec: remarks}

We wish to thank P\'{e}ter Mester and Boris Tsirelson for useful
discussions, which ignited and fanned our interest in the problem.


\begin{thebibliography}{99}
\bibitem{AG} J. Aaronson, D. Gilat, "On the structure of stationary one
    dependent processes", School of Mathematical Sciences, Tel Aviv University, Israel, 1987.

\bibitem{OBR} G.L. O'Brien, "Scaling transformations for {0,1}-valued
    sequences", \emph{Zeit. Wa. hr.} 53, 35--19, 1980.

\bibitem{BGRM} R. M. Burton, M. Goulet, R. Meester, "On $1$-Dependent Processes and $k$-Block Factors", \emph{Ann. Prob.} 21:4, 2157--2168, 1993.

\bibitem{CV} V. Chv\'atal, "Monochromatic paths and circuits in edge-colored
    graphs", \emph{J. Combinatorial Theory Ser. B} 10, 299--300, 1971.

\bibitem{SJ} S. Janson. "Runs in $m$-dependent sequences",
    \emph{Ann. Prob.} 12:5, 805--818, 1984.

\bibitem{MS} G. Moshkovitz, A. Shapira, "Integer Partitions and a New Proof of
    the Erdos-Szekeres Theorem", \emph{Eprint arXiv}:1206.4001 [math.CO].

\bibitem{V1} V. De Valk, "The maximal and minimal $2$-correlation of a class of $1$-dependent $0-1$ valued processes",
\emph{Israel J. of Math.} 62:2, 181--205, 1988.




\end{thebibliography}
\end{document}